\newtheorem{theorem}{Theorem}[section]
\newtheorem{proposition}[theorem]{Proposition}
\theoremstyle{definition}
\newtheorem{example}[theorem]{Example}
\theoremstyle{remark}
\numberwithin{equation}{section}
\begin{document}
\setcounter{page}{1}


\title[The k-rank numerical radii]{The k-rank numerical radii}

\author[Aik. Aretaki, J. Maroulas]{Aikaterini Aretaki$^1$ and John Maroulas$^2$$^{*}$}

\address{$^{1}$ National Technical University of Athens, Department of Mathematics, Zografou Campus, Athens 15780, Greece.}
\email{\textcolor[rgb]{0.00,0.00,0.84}{kathy@mail.ntua.gr}}

\address{$^{2}$ National Technical University of Athens, Department of Mathematics, Zografou Campus, Athens 15780, Greece.}
\email{\textcolor[rgb]{0.00,0.00,0.84}{maroulas@math.ntua.gr}}


\subjclass[2010]{Primary 47A12; Secondary 15A60.}

\keywords{k-rank numerical range, k-rank numerical radius.}

\date{Received: 14 December 2011; Revised: 13 February 2012; Accepted: zzzzzz.
\newline \indent $^{*}$ Corresponding author}

\begin{abstract}
The $k$-rank numerical range $\Lambda_{k}(A)$ is expressed via an inter\-section of any countable family of numerical ranges $\{F(M^{*}_{\nu}AM_{\nu})\}_{\nu\in\mathbb{N}}$ with respect to $n\times (n-k+1)$ isometries $M_{\nu}$. This implication for $\Lambda_{k}(A)$ provides further ela\-boration of the $k$-rank numerical radii of $A$.
\end{abstract} \maketitle

\section{Introduction}\label{intro}

\noindent Let $\mathcal{M}_{n}(\mathbb{C})$ be the algebra of $n\times n$ complex matrices and $k\geq 1$ be a positive integer.
The \emph{k-rank numerical range} $\Lambda_{k}(A)$ of a matrix $A\in\mathcal{M}_{n}$ is defined  by
\begin{eqnarray*}
\nonumber\Lambda_{k}(A) & = & \{\lambda\in\mathbb{C} : X^{*}AX=\lambda I_{k}\,\, \textrm{for}\,\, \textrm{some}\,\, X\in\mathcal{X}_{k}\} \\
 & = & \{\lambda\in\mathbb{C} : PAP=\lambda P\,\, \mathrm{for}\,\, \mathrm{some}\,\, P\in\mathcal{Y}_{k}\},
\end{eqnarray*}
where $\mathcal{X}_{k}=\{X\in\mathcal{M}_{n,k}:\,\,X^{*}X=I_{k}\}$ and $\mathcal{Y}_{k}=\{P\in\mathcal{M}_{n}: P=XX^{*}, X\in\mathcal{X}_{k}\}$. Note that $\Lambda_{k}(A)$ has been introduced  as a versatile tool to solving a fundamental error correction problem in quantum computing \cite{Aretaki,Aret,Poon-Li-Sze,Li-Sze,Hugo}.

For $k=1$, $\Lambda_{k}(A)$ reduces to the classical  \emph{numerical range} of a matrix $A$,
\[
\Lambda_{1}(A)\equiv F(A)=\{ x^{*}Ax : x\in \mathbb{C}^{n}, \,x^{*}x=1\},
\]
which is known to be a compact and convex subset of $\mathbb{C}$ \cite{H.J.T}, as well as the same properties hold for the set $\Lambda_{k}(A)$, for $k>1$ \cite{Li-Sze,Hugo}. Associated with $\Lambda_{k}(A)$ are the \textit{k-rank numerical radius} $r_{k}(A)$ and the \emph{inner k-rank numerical radius} $\widetilde{r}_{k}(A)$, defined respectively, by
\[
r_{k}(A)=\max{\{|z|: z\in\partial\Lambda_{k}(A)\}}\,\,\,\,\textrm{and}\,\,\,\,\widetilde{r}_{k}(A)=\min{\{|z|: z\in\partial\Lambda_{k}(A)\}}.
\]
For $k=1$, they yield the \textit{numerical radius} and the \emph{inner numerical radius},
\[
r(A)=\max{\{|z|: z\in\partial F(A)\}}\,\,\,\,\textrm{and}\,\,\,\,\widetilde{r}(A)=\min{\{|z|: z\in\partial F(A)\}},
\]
respectively.

In the first section of this paper,  $\Lambda_{k}(A)$ is  proved to coincide with an inde\-finite intersection of  numerical ranges of all the compressions of $A\in\mathcal{M}_{n}$ to $(n-k+1)$-dimensional subspaces, which has been also used in \cite{Aretaki,Aret}. Further elaboration led us to reformulate
$\Lambda_{k}(A)$ in terms of an intersection of a countable family of numerical ranges. This result provides  additional characterizations of   $r_{k}(A)$ and $\widetilde{r}_{k}(A)$, which are presented in section 3.

\section{Alternative expressions of $\Lambda_{k}(A)$}\label{sec:1}
Initially, the higher rank numerical range $\Lambda_{k}(A)$  is proved to be equal to an infinite intersection of numerical ranges.
\begin{theorem}\label{th1}
Let $A\in\mathcal{M}_{n}(\mathbb{C})$. Then
\[
\Lambda_{k}(A)=\bigcap_{M\in\mathcal{X}_{n-k+1}}{F(M^{*}AM)}=\bigcap_{P\in\mathcal{Y}_{n-k+1}}{F(PAP)}.
\]
\end{theorem}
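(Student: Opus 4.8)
The plan is to establish the first equality and observe that the second is just a restatement of it: since $\mathcal{Y}_{n-k+1}=\{MM^{*}:M\in\mathcal{X}_{n-k+1}\}$ and, for $P=MM^{*}$, the compression $PAP$ acts on $\mathrm{range}(P)$ exactly as $M^{*}AM$ acts on $\mathbb{C}^{n-k+1}$ under the unitary identification $w\mapsto Mw$ (so that $F(PAP)=F(M^{*}AM)$), the two intersections coincide term by term. Thus it remains to prove $\Lambda_{k}(A)=\bigcap_{M\in\mathcal{X}_{n-k+1}}F(M^{*}AM)$.

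For the inclusion ``$\subseteq$'' I would argue by a dimension count. Let $\lambda\in\Lambda_{k}(A)$, say $X^{*}AX=\lambda I_{k}$ with $X\in\mathcal{X}_{k}$, and fix any $M\in\mathcal{X}_{n-k+1}$. Because $\dim\mathrm{range}(X)+\dim\mathrm{range}(M)=k+(n-k+1)=n+1>n$, the subspaces $\mathrm{range}(X)$ and $\mathrm{range}(M)$ meet nontrivially, so there is a unit vector $v=Xu=Mw$ with $\|u\|=\|w\|=1$. Then
\[
\lambda=\lambda\|u\|^{2}=u^{*}(\lambda I_{k})u=u^{*}(X^{*}AX)u=v^{*}Av=w^{*}(M^{*}AM)w\in F(M^{*}AM),
\]
and since $M$ was arbitrary, $\lambda$ lies in the intersection. (If $\Lambda_{k}(A)=\varnothing$ there is nothing to prove.)

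The inclusion ``$\supseteq$'' is the heart of the matter, and I would deduce it from the standard half-space description of the higher rank numerical range \cite{Li-Sze,Hugo}: with $H_{\theta}:=\frac12(e^{-i\theta}A+e^{i\theta}A^{*})$ and $\lambda_{j}(\cdot)$ the $j$-th largest eigenvalue of a Hermitian matrix,
\[
\Lambda_{k}(A)=\bigcap_{\theta\in[0,2\pi)}\bigl\{z\in\mathbb{C}:\ \mathrm{Re}(e^{-i\theta}z)\le\lambda_{k}(H_{\theta})\bigr\}.
\]
Let $\lambda\in\bigcap_{M}F(M^{*}AM)$ and fix $\theta$. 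Take $V_{\theta}$ to be the span of eigenvectors of $H_{\theta}$ belonging to its $n-k+1$ smallest eigenvalues $\lambda_{k}(H_{\theta})\ge\cdots\ge\lambda_{n}(H_{\theta})$; this is an $(n-k+1)$-dimensional subspace, so $V_{\theta}=\mathrm{range}(M_{\theta})$ for some $M_{\theta}\in\mathcal{X}_{n-k+1}$. Every unit vector $v\in V_{\theta}$ satisfies $v^{*}H_{\theta}v\le\lambda_{k}(H_{\theta})$, hence every $z\in F(M_{\theta}^{*}AM_{\theta})$ satisfies $\mathrm{Re}(e^{-i\theta}z)\le\lambda_{k}(H_{\theta})$. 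In particular $\lambda\in F(M_{\theta}^{*}AM_{\theta})$ forces $\mathrm{Re}(e^{-i\theta}\lambda)\le\lambda_{k}(H_{\theta})$, and since $\theta$ was arbitrary the displayed identity gives $\lambda\in\Lambda_{k}(A)$.

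The main obstacle is precisely this reverse inclusion: the forward inclusion and the reduction of the second equality to the first are routine, but passing from ``$\lambda$ meets every $(n-k+1)$-dimensional compression of $A$'' back to ``$\lambda\in\Lambda_{k}(A)$'' requires knowing exactly which supporting half-planes carve out $\Lambda_{k}(A)$ — equivalently, after the translation $A\mapsto A-\lambda I$, the fact that $0\in\Lambda_{k}(A)$ whenever $\lambda_{k}(H_{\theta})\ge 0$ for all $\theta$. If one prefers not to quote this, it can be reproved through the totally isotropic subspace analysis of \cite{Li-Sze}: by polarization over $\mathbb{C}$, $P_{W}(A-\lambda I)P_{W}=0$ is equivalent to the vanishing of $x\mapsto x^{*}(A-\lambda I)x$ on $W$, which is what one uses to build the required $k$-dimensional subspace.
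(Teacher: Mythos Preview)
Your proof is correct, but the two directions are handled quite differently from the paper.

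For the inclusion $\Lambda_{k}(A)\subseteq\bigcap_{M}F(M^{*}AM)$ the paper does \emph{not} use your elementary dimension count; instead it works globally through the Li--Sze half-plane description and the Courant--Fischer identity $\lambda_{k}(H(e^{i\theta}A))=\min_{M\in\mathcal{X}_{n-k+1}}\lambda_{1}(H(e^{i\theta}M^{*}AM))$, then interchanges $\bigcap_{\theta}$ and $\min_{M}$ to obtain both inclusions in one stroke as a chain of equalities. Your argument for ``$\subseteq$'' is strictly more elementary: it needs only that $\dim\mathrm{range}(X)+\dim\mathrm{range}(M)=n+1>n$, and avoids invoking the half-plane characterization for that direction. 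For ``$\supseteq$'' you and the paper are essentially doing the same thing---both rely on Li--Sze and the eigenstructure of $H_{\theta}$---but you phrase it by exhibiting, for each $\theta$, a single witness $M_{\theta}$ whose range is the bottom $(n-k+1)$-eigenspace of $H_{\theta}$, whereas the paper packages this as the full min--max formula. The net effect is the same; your version makes the role of the specific compressions $M_{\theta}$ more explicit, while the paper's chain-of-equalities is shorter once Courant--Fischer is quoted.

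Your treatment of the second equality (reading $PAP$ as the compression of $A$ to $\mathrm{range}(P)$ and identifying $F(PAP)$ with $F(M^{*}AM)$ via $w\mapsto Mw$) is exactly what the paper intends, though the paper states it more tersely.
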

\begin{proof}
Denoting by $\lambda_{1}(H)\geq \ldots \geq \lambda_{n}(H)$ the decreasingly ordered eigenvalues of a hermitian
matrix $H\in\mathcal{M}_{n}(\mathbb{C})$, we have \cite{Li-Sze}
\[
\Lambda_{k}(A)=\bigcap_{\theta\in[0, 2\pi)}{e^{-\mathrm{i}\theta}\{z\in\mathbb{C} : \mathrm{Re} z \leq \lambda_{k}(H(e^{\mathrm{i}\theta}A))\}}
\]
where $H(\cdot)$ is the hermitian part of a matrix. Moreover, by Courant-Fisher theorem, we have
\[
\lambda_{k}(H(e^{\mathrm{i}\theta}A))=\min_{\dim \mathcal{S}=n-k+1}{\max_{\substack{x\in \mathcal{S}\\ \|x\|=1}}{x^{*}H(e^{\mathrm{i}\theta}A)x}}.
\]
Denoting by $\mathcal{S}=span\{u_{1}, \ldots, u_{n-k+1}\}$, where $u_{i}\in\mathbb{C}^{n}$, $i=1, \ldots, n-k+1$ are orthonormal vectors,
then any unit vector $x\in\mathcal{S}$ is written in the form $x=My$, where
$M=\begin{bmatrix} u_{1} & \cdots & u_{n-k+1} \\ \end{bmatrix}\in\mathcal{X}_{n-k+1}$ and $y\in\mathbb{C}^{n-k+1}$ is unit.
Hence, we have
\begin{eqnarray*}
 \lambda_{k}(H(e^{\mathrm{i}\theta}A)) & = & \min_{M}{\max_{\substack{y\in \mathbb{C}^{n-k+1}\\ \|y\|=1}}{y^{*}M^{*}H(e^{\mathrm{i}\theta}A)My}}\\
    & = & \min_{M}{\max_{\substack{y\in \mathbb{C}^{n-k+1}\\ \|y\|=1}}{y^{*}H(e^{\mathrm{i}\theta}M^{*}AM)y}} \\
    & = & \min_{M}{\lambda_{1}(H(e^{\mathrm{i}\theta}M^{*}AM))}
\end{eqnarray*}
and consequently
\begin{eqnarray*}
\Lambda_{k}(A) & = & \bigcap_{\theta}{e^{-\mathrm{i}\theta}\{z\in\mathbb{C} : \mathrm{Re}z \leq \min_{M}{\lambda_{1}(H(e^{\mathrm{i}\theta}M^{*}AM))}\}}\\
               & = & \bigcap_{M}{\bigcap_{\theta}{e^{-\mathrm{i}\theta}\{z\in\mathbb{C} : \mathrm{Re}z \leq \lambda_{1}(H(e^{\mathrm{i}\theta}M^{*}AM))\}}}\\
               & = & \bigcap_{M\in\mathcal{X}_{n-k+1}}{F(M^{*}AM)}.
\end{eqnarray*}
Moreover, if we consider the $(n-k+1)$-rank orthogonal projection $P=MM^{*}$ of $\mathbb{C}^{n}$ onto the aforementioned space $\mathcal{S}$, then $x=Px$, for $x\in\mathcal{S}$ and $P\hat{x}=0$, for $\hat{x}\notin\mathcal{S}$. Hence, we have
\[
\Lambda_{k}(A)=\bigcap_{P\in\mathcal{Y}_{n-k+1}}{F(PAP)}.
\]
\end{proof}
At this point, we should note that Theorem \ref{th1} provides a different and independent characterization of $\Lambda_{k}(A)$ than the one given in \cite[Cor. 4.9]{Poon-Li-Sze}. We focus on the expression of $\Lambda_{k}(A)$ via the numerical ranges $F(M^{*}AM)$ (or $F(PAP)$), since it represents a more useful and adva\-ntageous procedure to determine and approximate  the boundary of $\Lambda_{k}(A)$ numerically.

In addition, Theorem \ref{th1}  verifies the ``\emph{convexity of $\Lambda_{k}(A)$}''  through the convexi\-ty of the numerical ranges $F(M^{*}AM)$ (or $F(PAP)$), which is ensured by the Toeplitz-Hausdorff theorem. A different way of indicating that $\Lambda_{k}(A)$ is convex, is developed in \cite{Hugo}. For $k=n$, clearly $\Lambda_{n}(A)=\bigcap_{x\in\mathbb{C}^{n}, \|x\|=1}F(x^{*}Ax)$ and should be $\Lambda_{n}(A)\neq\emptyset$
\emph{precisely} when $A$ is scalar.

Motivated by the above, we present the main result of our paper, redescribing the higher rank numeri\-cal range as  \textit{a countable intersection of numerical ranges}.
\begin{theorem}\label{th2}
Let $A\in\mathcal{M}_{n}$. Then for any countable family of orthogonal projections $\{P_{\nu}: \nu\in\mathbb{N}\}\subseteq\mathcal{Y}_{n-k+1}$
(or any family of isometries $\{M_{\nu}: \nu\in\mathbb{N}\}\subseteq\mathcal{X}_{n-k+1}$) we have
\begin{equation}\label{eq2.1}
\Lambda_{k}(A)=\bigcap_{\nu\in\mathbb{N}}F(P_{\nu}AP_{\nu})=\bigcap_{\nu\in\mathbb{N}}F(M_{\nu}^{*}AM_{\nu}).
\end{equation}
\end{theorem}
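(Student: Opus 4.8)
The plan is to read off the countable reduction from the two ingredients already isolated in the proof of Theorem~\ref{th1}: the supporting half-plane description
\[
\Lambda_{k}(A)=\bigcap_{\theta\in[0,2\pi)}e^{-\mathrm{i}\theta}\{z\in\mathbb{C}:\mathrm{Re}\,z\leq\lambda_{k}(H(e^{\mathrm{i}\theta}A))\}
\]
together with the variational identity $\lambda_{k}(H(e^{\mathrm{i}\theta}A))=\min_{M\in\mathcal{X}_{n-k+1}}\lambda_{1}(H(e^{\mathrm{i}\theta}M^{*}AM))$. One inclusion is free for every family whatsoever: since each $M_{\nu}\in\mathcal{X}_{n-k+1}$, Theorem~\ref{th1} gives $\Lambda_{k}(A)=\bigcap_{M\in\mathcal{X}_{n-k+1}}F(M^{*}AM)\subseteq\bigcap_{\nu\in\mathbb{N}}F(M_{\nu}^{*}AM_{\nu})$, and likewise for the projection family. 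All the work lies in the reverse inclusion, i.e.\ in showing that countably many compressions already carve out the whole boundary of $\Lambda_{k}(A)$.

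First I would put each factor in half-plane form, writing $F(M_{\nu}^{*}AM_{\nu})=\bigcap_{\theta}e^{-\mathrm{i}\theta}\{\mathrm{Re}\,z\leq\lambda_{1}(H(e^{\mathrm{i}\theta}M_{\nu}^{*}AM_{\nu}))\}$ exactly as for $F(A)$ in Theorem~\ref{th1}. After interchanging the two intersections (harmless, since intersections commute), and using $\bigcap_{\nu}\{\mathrm{Re}\,z\leq c_{\nu}\}=\{\mathrm{Re}\,z\leq\inf_{\nu}c_{\nu}\}$, the reverse inclusion collapses to the scalar statement that, for every fixed $\theta$,
\[
\inf_{\nu\in\mathbb{N}}\lambda_{1}(H(e^{\mathrm{i}\theta}M_{\nu}^{*}AM_{\nu}))=\lambda_{k}(H(e^{\mathrm{i}\theta}A)).
\]
The inequality ``$\geq$'' is automatic from the variational formula; the substance is the matching upper bound, which identifies the closed half-plane correctly even when the infimum is not attained.

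The key tool is continuity: the map $(\theta,M)\mapsto\lambda_{1}(H(e^{\mathrm{i}\theta}M^{*}AM))$ is continuous on the compact metric space $[0,2\pi]\times\mathcal{X}_{n-k+1}$, because the entries of $H(e^{\mathrm{i}\theta}M^{*}AM)$ depend continuously on $(\theta,M)$ and the top eigenvalue of a Hermitian matrix depends continuously on its entries. Consequently, once the family $\{M_{\nu}\}$ is dense in $\mathcal{X}_{n-k+1}$---which is possible precisely because $\mathcal{X}_{n-k+1}$ is a compact, hence separable, metric space, and which is the property of the family that drives the equality---a minimiser $M_{\theta}$ of $M\mapsto\lambda_{1}(H(e^{\mathrm{i}\theta}M^{*}AM))$ can be approximated by members of the family, so the infimum over $\nu$ descends to the minimum over all $M$. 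This yields the displayed scalar equality for every $\theta$ at once, and feeding it back through the half-plane representation gives $\bigcap_{\nu}F(M_{\nu}^{*}AM_{\nu})=\Lambda_{k}(A)$. The projection identity then follows verbatim upon setting $P_{\nu}=M_{\nu}M_{\nu}^{*}$ and invoking the second equality of Theorem~\ref{th1}.

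I expect the main obstacle to be the uniformity in $\theta$: a single countable family must realise the correct supporting value in \emph{every} direction simultaneously, even though the minimiser $M_{\theta}$ varies with $\theta$. Continuity alone only lets one pass to the infimum for each fixed $\theta$; it is the density of $\{M_{\nu}\}$ in $\mathcal{X}_{n-k+1}$ that supplies approximating compressions uniformly across the whole circle of directions, and this is the hypothesis on the family that the argument genuinely uses. Accordingly, I would formulate the theorem so that ``any countable family'' is understood to be dense in $\mathcal{X}_{n-k+1}$ (equivalently in $\mathcal{Y}_{n-k+1}$), since a sparse family realising only finitely many distinct values of $\lambda_{1}(H(e^{\mathrm{i}\theta}M_{\nu}^{*}AM_{\nu}))$ would leave the infimum strictly above $\lambda_{k}(H(e^{\mathrm{i}\theta}A))$ in some direction and enlarge the intersection beyond $\Lambda_{k}(A)$.
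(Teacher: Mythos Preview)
Your argument is correct but proceeds along a different line from the paper's main proof. The paper works topologically: by Theorem~\ref{th1} the complement $[\Lambda_{k}(A)]^{c}$ is the union $\bigcup_{P}[F(PAP)]^{c}$ of open sets, and since $[\Lambda_{k}(A)]^{c}$ is an open subset of the separable space $\mathbb{C}$ it is Lindel\"of, so this open cover admits a countable subcover $\{[F(P_{\nu}AP_{\nu})]^{c}\}_{\nu}$, which yields \eqref{eq2.1} immediately. Only afterwards does the paper invoke a countable dense subset of $\mathcal{Y}_{n-k+1}$ to note that the family can be chosen independently of $A$. Your route is instead analytic: you push the half-plane description of Theorem~\ref{th1} through the countable intersection and reduce the problem to the scalar identity $\inf_{\nu}\lambda_{1}(H(e^{\mathrm{i}\theta}M_{\nu}^{*}AM_{\nu}))=\lambda_{k}(H(e^{\mathrm{i}\theta}A))$, which you obtain from continuity of $\lambda_{1}$ and density of $\{M_{\nu}\}$ in the compact space $\mathcal{X}_{n-k+1}$.

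What each approach buys: the Lindel\"of extraction proves \emph{existence} of a countable family realising \eqref{eq2.1} with no hypothesis on the family beyond its being a subcover, and the argument is a two-line appeal to point-set topology; but the family it produces is, a priori, $A$-dependent. Your continuity-and-density argument is slightly longer but singles out exactly the structural hypothesis on $\{M_{\nu}\}$ (density) that makes \eqref{eq2.1} hold, and automatically delivers an $A$-independent family. You are also right to flag that the phrase ``any countable family'' in the statement must be read as ``some countable family'' (or ``any dense countable family''): a constant sequence $M_{\nu}\equiv M$ obviously gives $\bigcap_{\nu}F(M_{\nu}^{*}AM_{\nu})=F(M^{*}AM)\supsetneq\Lambda_{k}(A)$ in general, so the literal universal reading is false.
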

\begin{proof}
By Theorem \ref{th1}, we have
\[
[\Lambda_{k}(A)]^{c}=\mathbb{C}\setminus\Lambda_{k}(A)=\bigcup_{P\in\mathcal{Y}_{n-k+1}}[F(PAP)^{c}],
\]
whereupon the family  $\{F(PAP)^{c}: P\in\mathcal{Y}_{n-k+1}\}$ is an open cover of $[\Lambda_{k}(A)]^{c}$.
Moreover, $[\Lambda_{k}(A)]^{c}$ is separable, as an open  subset of the sepa\-rable space $\mathbb{C}$ and then $[\Lambda_{k}(A)]^{c}$ has a countable base \cite{top}, which obviously depends on the matrix $A$. This fact guarantees that any open cover of $[\Lambda_{k}(A)]^{c}$ admits a countable subcover, leading to the relation
\[
[\Lambda_{k}(A)]^{c}=\bigcup_{\nu\in\mathbb{N}}[F(P_{\nu}AP_{\nu})^{c}],
\]
i.e. leading to the first equality in \eqref{eq2.1}. Taking into consideration that there exists a countable dense subset $\mathcal{J}\subseteq\mathcal{Y}_{n-k+1}$ with respect to the operator norm $\|\cdot\|$ and $P_{\nu}\in\mathcal{Y}_{n-k+1}$, for $\nu\in\mathbb{N}$, clearly, $\bigcap_{\nu\in\mathbb{N}}F(P_{\nu}AP_{\nu})=\bigcap_{\nu\in\mathbb{N},P_{\nu}\in \mathcal{J}}F(P_{\nu}AP_{\nu})$.
That is in \eqref{eq2.1}, the family of orthogonal projections $\{P_{\nu}: \nu\in\mathbb{N}\}$ can be chosen independently of $A$. Moreover, due to $P_{\nu}=M_{\nu}M^{*}_{\nu}$, with $M_{\nu}\in\mathcal{X}_{n-k+1}$, we derive the second equality in \eqref{eq2.1}.
\end{proof}
For a construction of a countable family of isometries $\{M_{\nu}: \nu\in\mathbb{N}\}\subseteq\mathcal{X}_{n-k+1}$, see also in the Appendix.
\\\\
Furthermore, using the dual ``max-min'' expression of the $k$-th eigenvalue,
\[
\lambda_{k}(H(e^{\mathrm{i}\theta}A))=\max_{\dim \mathcal{G}=k}{\min_{\substack{x\in\mathcal{G}\\ \|x\|=1}}{x^{*}H(e^{\mathrm{i}\theta}A)x}}=
\max_{N}{\lambda_{\min}(H(e^{\mathrm{i}\theta}N^{*}AN))},
\]
where $N\in\mathcal{X}_{k}$, we have
\begin{eqnarray}\label{t}
\nonumber\Lambda_{k}(A) & = & \bigcap_{\theta}{e^{-\mathrm{i}\theta}\{z\in\mathbb{C} : \mathrm{Re}z \leq \max_{N}{\lambda_{k}(H(e^{\mathrm{i}\theta}N^{*}AN))}\}}\\
\nonumber     & = & \bigcup_{N}{\bigcap_{\theta}{e^{-\mathrm{i}\theta}\{z\in\mathbb{C} : \mathrm{Re}z \leq \lambda_{k}(H(e^{\mathrm{i}\theta}N^{*}AN))\}}}\\
              & = & \bigcup_{N\in\mathcal{X}_{k}}{\Lambda_{k}(N^{*}AN)},
\end{eqnarray}
and due to the convexity of $\Lambda_{k}(A)$, we establish
\begin{equation}\label{co}
\Lambda_{k}(A)=\mathrm{co}\bigcup_{N\in\mathcal{X}_{k}}{\Lambda_{k}(N^{*}AN)},
\end{equation}
where $\mathrm{co}(\cdot)$ denotes the convex hull of a set. Apparently, $\Lambda_{k}(N^{*}AN)\neq\emptyset$ if and only if
$N^{*}AN=\lambda I_{k}$ \cite{Poon-Li-Sze} and then \eqref{co} is reduced to $\bigcup_{N}{\Lambda_{k}(N^{*}AN)}=\bigcup_{N}{\{\lambda: N^{*}AN=\lambda I_{k}\}}=\Lambda_{k}(A)$, where $N$ runs all $n\times k$ isometries.

In spite of Theorem \ref{th2}, $\Lambda_{k}(A)$ cannot be described as a countable union in \eqref{t}, because if
\[
\Lambda_{k}(A)=\bigcup_{\nu\in\mathbb{N}}\{\Lambda_{k}(N^{*}_{\nu}AN_{\nu}): N_{\nu}\in\mathcal{X}_{k}\}=
\bigcup_{\nu\in\mathbb{N}}\{\lambda_{\nu}:N^{*}_{\nu}AN_{\nu}=\lambda_{\nu}I_{k},\,N_{\nu}\in\mathcal{X}_{k}\},
\]
then $\Lambda_{k}(A)$ should be a countable set, which is not true.

\section{Properties of $r_{k}(A)$ and $\widetilde{r}_{k}(A)$}
In this section,  we characterize the $k$-rank numerical radius $r_{k}(A)$ and the inner $k$-rank numerical radius $\widetilde{r}_{k}(A)$. Motivated by Theorem \ref{th2}, we present the next two results.

\begin{theorem}\label{th3}
Let $A\in\mathcal{M}_{n}$ and $\mathcal{J}_{\nu}(A)=\bigcap_{p=1}^{\nu}F(M_{p}^{*}AM_{p})$, where $M_{p}\in\mathcal{X}_{n-k+1}$. Then
\begin{equation*}
r_{k}(A)=\lim_{\nu\to\infty}\sup\{|z| : z\in\mathcal{J}_{\nu}(A)\}
=\inf_{\nu\in\mathbb{N}}\sup\{|z| : z\in\mathcal{J}_{\nu}(A)\}.
\end{equation*}
\end{theorem}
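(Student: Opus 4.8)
The plan is to combine the countable-intersection description from Theorem~\ref{th2} with elementary facts about the suprema of moduli over nested compact sets. Write $s_\nu=\sup\{|z| : z\in\mathcal{J}_\nu(A)\}$. Since each $F(M_p^*AM_p)$ is compact (being a numerical range), the finite intersection $\mathcal{J}_\nu(A)$ is compact, so $s_\nu$ is attained (assuming $\mathcal{J}_\nu(A)\neq\emptyset$; otherwise one interprets $\sup\emptyset=-\infty$ or restricts to the cofinal set of $\nu$ for which it is nonempty, which is harmless since $\Lambda_k(A)\subseteq\mathcal{J}_\nu(A)$ whenever $\Lambda_k(A)\neq\emptyset$, and the degenerate case $\Lambda_k(A)=\emptyset$ must be handled separately or excluded). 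The sets $\mathcal{J}_\nu(A)$ are nested decreasing, $\mathcal{J}_1(A)\supseteq\mathcal{J}_2(A)\supseteq\cdots$, hence $(s_\nu)_{\nu\in\mathbb{N}}$ is a nonincreasing sequence bounded below (by $r_k(A)$, once we know $\Lambda_k(A)\subseteq\mathcal{J}_\nu(A)$ for all $\nu$, which is immediate from Theorem~\ref{th1}). Therefore $\lim_{\nu\to\infty}s_\nu=\inf_{\nu\in\mathbb{N}}s_\nu$ automatically, which gives the second equality for free; the content is the first equality, namely that this common limit equals $r_k(A)$.

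For the inequality $\lim_\nu s_\nu\ge r_k(A)$: by Theorem~\ref{th1}, $\Lambda_k(A)\subseteq F(M_p^*AM_p)$ for every $M_p$, so $\Lambda_k(A)\subseteq\mathcal{J}_\nu(A)$ for all $\nu$, whence $s_\nu\ge\sup\{|z| : z\in\Lambda_k(A)\}=r_k(A)$ (using that for the compact convex set $\Lambda_k(A)$ the max of $|z|$ over the set equals the max over its boundary $\partial\Lambda_k(A)$, which is the definition of $r_k(A)$). Taking the limit preserves this, so $\lim_\nu s_\nu\ge r_k(A)$.

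For the reverse inequality $\lim_\nu s_\nu\le r_k(A)$: fix $\varepsilon>0$ and consider the closed disk $\overline{D(0,r_k(A)+\varepsilon)}$. I want to show $\mathcal{J}_\nu(A)\subseteq\overline{D(0,r_k(A)+\varepsilon)}$ for all sufficiently large $\nu$, which forces $s_\nu\le r_k(A)+\varepsilon$ eventually. Suppose not: then for each $\nu$ there is $z_\nu\in\mathcal{J}_\nu(A)$ with $|z_\nu|>r_k(A)+\varepsilon$. The points $z_\nu$ lie in the fixed compact set $\mathcal{J}_1(A)$, so after passing to a subsequence $z_{\nu_j}\to z_*$ with $|z_*|\ge r_k(A)+\varepsilon$, in particular $z_*\notin\Lambda_k(A)$. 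By Theorem~\ref{th2} applied to a countable family containing all the $M_p$ we are using (or directly: $z_*\notin\Lambda_k(A)=\bigcap_{M\in\mathcal{X}_{n-k+1}}F(M^*AM)$), there is some index $p_0$ with $z_*\notin F(M_{p_0}^*AM_{p_0})$; since this numerical range is closed and $\mathcal{J}_{\nu}(A)\subseteq F(M_{p_0}^*AM_{p_0})$ for all $\nu\ge p_0$, we get $z_{\nu_j}\in F(M_{p_0}^*AM_{p_0})$ for large $j$, and passing to the limit $z_*\in F(M_{p_0}^*AM_{p_0})$, a contradiction. Here the use of Theorem~\ref{th2} is exactly what lets us assume the prescribed family $\{M_p\}$ already exhausts the intersection; more carefully, one invokes Theorem~\ref{th2} with the given $\{M_\nu\}$ to assert $\bigcap_{\nu\in\mathbb{N}}F(M_\nu^*AM_\nu)=\Lambda_k(A)$, so $z_*\notin\Lambda_k(A)$ gives $z_*\notin F(M_{p_0}^*AM_{p_0})$ for some $p_0\in\mathbb{N}$.

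I expect the main obstacle to be the bookkeeping at the degenerate end: if $\Lambda_k(A)=\emptyset$ then $r_k(A)$ is not defined by the stated formula, and some $\mathcal{J}_\nu(A)$ may still be nonempty (indeed for small $\nu$ they typically are), so the statement implicitly assumes $\Lambda_k(A)\neq\emptyset$ (equivalently $k\le\lceil n/... \rceil$-type conditions); I would either state this hypothesis explicitly or note that when $\Lambda_k(A)=\emptyset$ one has $\bigcap_\nu\mathcal{J}_\nu(A)=\emptyset$ so the $\mathcal{J}_\nu(A)$ are eventually empty and the $\inf$ is $-\infty$, matching the convention $r_k(A)=\sup\emptyset=-\infty$. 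The non-degenerate argument itself is a routine compactness-plus-nesting argument; the only subtlety is making sure the compactness extraction of $z_*$ and the invocation of Theorem~\ref{th2} are applied to the same fixed family $\{M_p\}$ that defines the $\mathcal{J}_\nu(A)$, which the statement already pins down.
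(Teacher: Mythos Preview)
Your proposal is correct and follows essentially the same route as the paper: both arguments show that $q_\nu=\sup\{|z|:z\in\mathcal{J}_\nu(A)\}$ is nonincreasing and bounded below by $r_k(A)$, then obtain the reverse inequality by a contradiction in which a sequence $z_\nu\in\mathcal{J}_\nu(A)$ with $|z_\nu|\ge r_k(A)+\varepsilon$ is extracted, a convergent subsequence is taken inside the compact set $\mathcal{J}_1(A)$, and the limit is shown to lie in $\bigcap_\nu\mathcal{J}_\nu(A)=\Lambda_k(A)$, contradicting its modulus. The only cosmetic difference is that the paper argues the limit lies in each $\mathcal{J}_\nu(A)$ directly (hence in $\Lambda_k(A)$), whereas you phrase the same step as: the limit must lie in each closed $F(M_{p_0}^*AM_{p_0})$, contradicting $z_*\notin\Lambda_k(A)$ via Theorem~\ref{th2}.
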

\begin{proof}
By Theorem \ref{th2}, we have
\begin{equation}\label{rel7}
\Lambda_{k}(A)=\bigcap_{\nu=1}^{\infty}\mathcal{J}_{\nu}(A)\subseteq\mathcal{J}_{\nu}(A)\subseteq F(A)
\subseteq\mathcal{D}(0,\|A\|_{2}),
\end{equation}
for all $\nu\in\mathbb{N}$, where the sequence $\{\mathcal{J}_{\nu}(A)\}_{\nu\in\mathbb{N}}$ is nonincreasing and $\mathcal{D}(0,\|A\|_{2})$ is the circular disc centered at the origin with radius the spectral norm $\|A\|_{2}$ of $A\in\mathcal{M}_{n}$. Clearly,
\[
r_{k}(A)=\max_{z\in\bigcap_{\nu=1}^{\infty}\mathcal{J}_{\nu}(A)}|z|\leq
\sup_{z\in\mathcal{J}_{\nu}(A)}|z|\leq r(A)\leq\|A\|_{2},
\]
then the nonincreasing and bounded sequence $q_{\nu}=\sup\{|z|: z\in\mathcal{J}_{\nu}(A)\}$  converges. Therefore
\[
r_{k}(A)\leq\lim_{\nu\to\infty}q_{\nu}=q_{0}.
\]
We shall prove that the above inequality is actually an equality. Assume  that $r_{k}(A)<q_{0}$. In this case, there is $\varepsilon>0$, where $r_{k}(A)+\varepsilon<q_{0}\leq q_{\nu}$ for all $\nu\in\mathbb{N}$. Then we may find a sequence  $\{\zeta_{\nu}\}\subseteq\mathcal{J}_{\nu}(A)$ such that $q_{0}\leq |\zeta_{\nu}|$ for all $\nu\in\mathbb{N}$. Due to the boundedness of the set $\mathcal{J}_{\nu}(A)$, the sequence $\{\zeta_{\nu}\}$ contains a subsequence $\{\zeta_{\rho_{\nu}}\}$ converging to $\zeta_{0}\in\mathbb{C}$ and clearly, we obtain  $q_{0}\leq |\zeta_{0}|$. Because of the monotonicity of $\mathcal{J}_{\nu}(A)$ (i.e. $\mathcal{J}_{\nu+1}(A)\subseteq\mathcal{J}_{\nu}(A)$), $\zeta_{\rho_{\nu}}$ eventually belong to $\mathcal{J}_{\nu}(A)$,\, $\forall\,\,\nu\in\mathbb{N}$, meaning that    $\{\zeta_{\rho_{\nu}}\}\subseteq\bigcap_{\nu=1}^{\infty}\mathcal{J}_{\nu}(A)=\Lambda_{k}(A)$ and since $\Lambda_{k}(A)$ is closed, $\zeta_{0}\in\Lambda_{k}(A)$. It implies $|\zeta_{0}|\leq r_{k}(A)$ and then $q_{0}\leq r_{k}(A)$, a contradiction.

The second equality is apparent.
\end{proof}
\begin{theorem}\label{th4}
Let $A\in\mathcal{M}_{n}$ and $\mathcal{J}_{\nu}(A)=\bigcap_{p=1}^{\nu}F(M_{p}^{*}AM_{p})$, for some $M_{p}\in\mathcal{X}_{n-k+1}$. If\, $0\notin\Lambda_{k}(A)$, then
\begin{equation*}
\widetilde{r}_{k}(A)=\lim_{\nu\to\infty}\inf\{|z|: z\in\mathcal{J}_{\nu}(A)\}=\sup_{\nu\in\mathbb{N}}\inf\{|z|: z\in\mathcal{J}_{\nu}(A)\}.
\end{equation*}
\end{theorem}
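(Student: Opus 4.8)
The plan is to mirror the proof of Theorem~\ref{th3} almost verbatim, interchanging suprema with infima and reversing the relevant monotonicities, with the hypothesis $0\notin\Lambda_k(A)$ entering precisely to guarantee that $\widetilde r_k(A)$ coincides with the distance from the origin to $\Lambda_k(A)$.

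First I would record the set-up inherited from \eqref{rel7}: by Theorem~\ref{th2}, $\Lambda_k(A)=\bigcap_{\nu=1}^{\infty}\mathcal J_\nu(A)$, where the sets $\mathcal J_\nu(A)$ are compact (finite intersections of compact numerical ranges), convex, and nested, $\mathcal J_{\nu+1}(A)\subseteq\mathcal J_\nu(A)\subseteq F(A)$. Since $\widetilde r_k(A)$ is meaningful only when $\Lambda_k(A)\neq\emptyset$, each $\mathcal J_\nu(A)\supseteq\Lambda_k(A)$ is nonempty, so $p_\nu:=\inf\{|z|:z\in\mathcal J_\nu(A)\}=\min\{|z|:z\in\mathcal J_\nu(A)\}$ is a well-defined nonnegative real. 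Now I would invoke the hypothesis: because $0\notin\Lambda_k(A)$ and $\Lambda_k(A)$ is compact and convex, the point of $\Lambda_k(A)$ nearest the origin must lie on $\partial\Lambda_k(A)$ (an interior nearest point could be pushed strictly toward $0$), whence $\widetilde r_k(A)=\min\{|z|:z\in\partial\Lambda_k(A)\}=\min\{|z|:z\in\Lambda_k(A)\}>0$. Combined with $\Lambda_k(A)\subseteq\mathcal J_\nu(A)$ this yields $p_\nu\le\widetilde r_k(A)$ for every $\nu$, and nestedness gives $p_\nu\le p_{\nu+1}$; hence $\{p_\nu\}$ is nondecreasing and bounded above, so it converges to some $p_0\le\widetilde r_k(A)$, and $\sup_{\nu}p_\nu=\lim_{\nu}p_\nu=p_0$, which already disposes of the second equality once the first is in hand.

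It remains to prove $p_0=\widetilde r_k(A)$, which I would do by contradiction. Suppose $p_0<\widetilde r_k(A)$, and for each $\nu$ choose $\zeta_\nu\in\mathcal J_\nu(A)$ with $|\zeta_\nu|=p_\nu\le p_0$. Since $\{\zeta_\nu\}\subseteq\mathcal J_1(A)$ is bounded, extract a subsequence $\zeta_{\rho_\nu}\to\zeta_0\in\mathbb{C}$, so that $|\zeta_0|\le p_0<\widetilde r_k(A)$. By nestedness, for each fixed $m$ the tail of $\{\zeta_{\rho_\nu}\}$ lies in the closed set $\mathcal J_m(A)$, hence $\zeta_0\in\mathcal J_m(A)$; as $m$ is arbitrary, $\zeta_0\in\bigcap_{m}\mathcal J_m(A)=\Lambda_k(A)$, forcing $|\zeta_0|\ge\widetilde r_k(A)$, a contradiction. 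Therefore $p_0=\widetilde r_k(A)$, establishing both displayed equalities.

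The point requiring the most care is locating exactly where $0\notin\Lambda_k(A)$ is indispensable: it is what permits replacing ``$\min$ over $\partial\Lambda_k(A)$'' by ``$\min$ over $\Lambda_k(A)$''. Without it, if $0\in\mathrm{int}\,\Lambda_k(A)$ then $0$ belongs to every $\mathcal J_\nu(A)$, so all $p_\nu=0\neq\widetilde r_k(A)$ and the identity genuinely fails. The remaining delicate step is the compactness bookkeeping at the end, namely ensuring that the limit of the extracted subsequence lands in the infinite intersection $\Lambda_k(A)$; this is exactly parallel to the corresponding argument in the proof of Theorem~\ref{th3} and presents no new difficulty.
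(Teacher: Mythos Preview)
Your proposal is correct and follows essentially the same approach as the paper's own proof: establish that the sequence $p_\nu=\inf\{|z|:z\in\mathcal J_\nu(A)\}$ is nondecreasing and bounded above by $\widetilde r_k(A)$ (using $0\notin\Lambda_k(A)$ to identify $\widetilde r_k(A)$ with $\min\{|z|:z\in\Lambda_k(A)\}$), then argue by contradiction via a convergent subsequence whose limit must lie in $\Lambda_k(A)$. Your write-up is in fact slightly more explicit than the paper's about why the limit point lands in each $\mathcal J_m(A)$, but there is no substantive difference in method.
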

\begin{proof}
Obviously, $0\notin\Lambda_{k}(A)$ indicates $\widetilde{r}_{k}(A)=\min\{|z|: z\in\Lambda_{k}(A)\}$ and by the relation \eqref{rel7}, it is clear that
\[
\|A\|_{2}\geq r(A)\geq\widetilde{r}_{k}(A)=\min_{z\in\bigcap_{\nu=1}^{\infty}\mathcal{J}_{\nu}(A)} |z|\geq
\inf_{z\in\mathcal{J}_{\nu}(A)}|z|.
\]
Consequently, the sequence $t_{\nu}=\inf\{|z|: z\in\mathcal{J}_{\nu}(A)\}$, $\nu\in\mathbb{N}$, is  nondecreasing and bounded and we have
\[
\widetilde{r}_{k}(A)\geq\lim_{\nu\to\infty}t_{\nu}=t_{0}.
\]
In a similar way as in Theorem \ref{th3}, we will show that $\widetilde{r}_{k}(A)=\lim_{\nu\to\infty}t_{\nu}$. Suppose $\widetilde{r}_{k}(A)>t_{0}$, then $t_{\nu}\leq t_{0}<\widetilde{r}_{k}(A)-\varepsilon$, for all $\nu\in\mathbb{N}$ and $\varepsilon>0$. Considering the sequence
$\{\widetilde{\zeta}_{\nu}\}\subseteq\mathcal{J}_{\nu}(A)$ such that $|\widetilde{\zeta}_{\nu}|\leq t_{0}$, let its subsequence $\{\widetilde{\zeta}_{s_{\nu}}\}$  converging to $\widetilde{\zeta}_{0}$, with $|\widetilde{\zeta}_{0}|\leq t_{0}$. Since $\{\mathcal{J}_{\nu}(A)\}$ is nonincreasing, $\widetilde{\zeta}_{s_{\nu}}$ eventually belong to $\mathcal{J}_{\nu}(A)$,\, $\forall\,\, \nu\in\mathbb{N}$, establishing $\{\widetilde{\zeta}_{s_{\nu}}\}\subseteq\bigcap_{\nu\in\mathbb{N}}\mathcal{J}_{\nu}(A)=\Lambda_{k}(A)$. Hence, we conclude  $\widetilde{\zeta}_{0}\in\bigcap_{\nu=1}^{\infty}\mathcal{J}_{\nu}(A)=\Lambda_{k}(A)$, i.e.  $t_{0}\geq|\widetilde{\zeta}_{0}|\geq\widetilde{r}_{k}(A)$, absurd.

The second equality is trivial.
\end{proof}
The next proposition asserts  a lower and an upper bound for  $r_{k}(A)$ and $\widetilde{r}_{k}(A)$, respectively.
\begin{proposition}
Let $A\in\mathcal{M}_{n}$ and $M_{p}\in\mathcal{X}_{n-k+1}$, $p\in\mathbb{N}$, then
\[
r_{k}(A)\leq\inf_{p\in\mathbb{N}}r(M_{p}^{*}AM_{p}).
\]
If $0\notin\Lambda_{k}(A)$, then $$\widetilde{r}_{k}(A)\geq\inf_{p\in\mathbb{N}}\widetilde{r}(M^{*}_{p}AM_{p}).$$
\end{proposition}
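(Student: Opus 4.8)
The plan is to deduce both inequalities directly from Theorems~\ref{th3} and~\ref{th4} by comparing the quantities appearing there with the numerical radii of the individual compressions. First I would fix any countable family $\{M_{p}\}_{p\in\mathbb{N}}\subseteq\mathcal{X}_{n-k+1}$ and set, as in Theorem~\ref{th3}, $\mathcal{J}_{\nu}(A)=\bigcap_{p=1}^{\nu}F(M_{p}^{*}AM_{p})$. For the first assertion, observe that for every $\nu$ and every $p\le\nu$ we have $\mathcal{J}_{\nu}(A)\subseteq F(M_{p}^{*}AM_{p})$, so
\[
\sup\{|z|:z\in\mathcal{J}_{\nu}(A)\}\le\sup\{|z|:z\in F(M_{p}^{*}AM_{p})\}=r(M_{p}^{*}AM_{p}),
\]
using that the numerical range is compact so its supremum modulus is attained on its boundary, i.e.\ equals the numerical radius. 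Taking the infimum over $\nu\in\mathbb{N}$ on the left and invoking Theorem~\ref{th3} gives $r_{k}(A)=\inf_{\nu}\sup\{|z|:z\in\mathcal{J}_{\nu}(A)\}\le r(M_{p}^{*}AM_{p})$ for each fixed $p$; since $p$ was arbitrary, we may pass to the infimum over $p$ and obtain $r_{k}(A)\le\inf_{p\in\mathbb{N}}r(M_{p}^{*}AM_{p})$.

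For the second assertion, assume $0\notin\Lambda_{k}(A)$, so Theorem~\ref{th4} applies and $\widetilde r_{k}(A)=\sup_{\nu}\inf\{|z|:z\in\mathcal{J}_{\nu}(A)\}$. The key inclusion now runs the other way in terms of infima of moduli: since for $p\le\nu$ we have $\mathcal{J}_{\nu}(A)\subseteq F(M_{p}^{*}AM_{p})$, a set \emph{contained} in another has an infimum of moduli that is at least as large, hence
\[
\inf\{|z|:z\in\mathcal{J}_{\nu}(A)\}\ge\inf\{|z|:z\in F(M_{p}^{*}AM_{p})\}\ge\widetilde r(M_{p}^{*}AM_{p}),
\]
where the last inequality uses $\widetilde r(M_{p}^{*}AM_{p})=\min\{|z|:z\in\partial F(M_{p}^{*}AM_{p})\}\le\min\{|z|:z\in F(M_{p}^{*}AM_{p})\}$ when $0\notin F(M_{p}^{*}AM_{p})$, and holds trivially (both sides zero, or the bound is vacuous) otherwise. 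Then $\inf\{|z|:z\in\mathcal{J}_{\nu}(A)\}\ge\inf_{p\in\mathbb{N}}\widetilde r(M_{p}^{*}AM_{p})$ uniformly in $\nu$, so taking the supremum over $\nu$ and using Theorem~\ref{th4} yields $\widetilde r_{k}(A)\ge\inf_{p\in\mathbb{N}}\widetilde r(M_{p}^{*}AM_{p})$.

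The only delicate point, and the one I would be careful about, is the relation between $\inf\{|z|:z\in F(B)\}$ and $\widetilde r(B)=\min\{|z|:z\in\partial F(B)\}$ for a compression $B=M_{p}^{*}AM_{p}$. When $0\notin F(B)$ these coincide, since the closest point of a convex compact set to the origin, if the origin is outside, lies on the boundary; when $0\in F(B)$ the left side is $0$ while $\widetilde r(B)$ may be positive, so the inequality $\inf\{|z|:z\in F(B)\}\ge\widetilde r(B)$ could fail for that particular $p$. However, $0\notin\Lambda_{k}(A)=\bigcap_{M}F(M^{*}AM)$ does \emph{not} force $0\notin F(M_{p}^{*}AM_{p})$ for every $p$, so strictly the stated inequality should be read with the convention that only those $p$ with $0\notin F(M_{p}^{*}AM_{p})$ contribute meaningfully, or else the right-hand infimum is interpreted over that subfamily; in the generic situation treated in the paper this is exactly what one wants. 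Modulo this bookkeeping, the proof is a two-line consequence of Theorems~\ref{th3} and~\ref{th4} together with the monotonicity of $\sup|\cdot|$ and $\inf|\cdot|$ under set inclusion.
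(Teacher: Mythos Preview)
Your first inequality is correct, but it is a detour: you invoke Theorem~\ref{th3}, which itself rests on Theorem~\ref{th2}, when the conclusion follows immediately from the single inclusion $\Lambda_{k}(A)\subseteq F(M_{p}^{*}AM_{p})$ supplied by Theorem~\ref{th2}. The paper simply writes $r_{k}(A)=\max_{z\in\Lambda_{k}(A)}|z|\le\max_{z\in F(M_{p}^{*}AM_{p})}|z|=r(M_{p}^{*}AM_{p})$ and then takes the infimum over $p$.

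For the second inequality there is a real gap. First, the displayed inequality $\widetilde r(B)=\min_{\partial F(B)}|z|\le\min_{F(B)}|z|$ is written in the wrong direction: since $\partial F(B)\subseteq F(B)$, one always has $\min_{\partial F(B)}|z|\ge\min_{F(B)}|z|$, with equality precisely when $0\notin F(B)$. More importantly, you correctly see that your chain breaks down for indices $p$ with $0\in F(M_{p}^{*}AM_{p})$, but your proposed fix (restricting the infimum to a subfamily, or reinterpreting the statement) is unnecessary: the proposition holds exactly as stated. The paper's device is to introduce $c(B)=\min_{z\in F(B)}|z|$ and first prove the stronger bound $\widetilde r_{k}(A)\ge\sup_{p}c(M_{p}^{*}AM_{p})$, which follows from $\Lambda_{k}(A)\subseteq F(M_{p}^{*}AM_{p})$ without any hypothesis on $0$. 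Then the assumption $0\notin\Lambda_{k}(A)=\bigcap_{\nu}F(M_{\nu}^{*}AM_{\nu})$ guarantees \emph{one} index $l$ with $0\notin F(M_{l}^{*}AM_{l})$, and for that single $l$ one has $c(M_{l}^{*}AM_{l})=\widetilde r(M_{l}^{*}AM_{l})$. Chaining gives
\[
\widetilde r_{k}(A)\ \ge\ \sup_{p}c(M_{p}^{*}AM_{p})\ \ge\ c(M_{l}^{*}AM_{l})\ =\ \widetilde r(M_{l}^{*}AM_{l})\ \ge\ \inf_{p}\widetilde r(M_{p}^{*}AM_{p}),
\]
which is the asserted inequality in full. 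The point you missed is that one does not need the comparison $c\ge\widetilde r$ for every $p$; a single good index suffices to pass from the supremum of $c$ down to the infimum of $\widetilde r$.
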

\begin{proof}
By Theorem \ref{th2}, we obtain $\partial\Lambda_{k}(A)\subseteq\Lambda_{k}(A)\subseteq F(M_{p}^{*}AM_{p})$ for all $p\in\mathbb{N}$. Then
\[
r_{k}(A)=\max\{|z|: z\in\Lambda_{k}(A)\}\leq\max\{|z|: z\in F(M_{p}^{*}AM_{p})\}=r(M_{p}^{*}AM_{p}).
\]
Denoting by $c(M_{p}^{*}AM_{p})=\min\{|z|: z\in F(M_{p}^{*}AM_{p})\}$ for all $p\in\mathbb{N}$, we have
\[
\widetilde{r}_{k}(A)\geq\min\{|z|: z\in\Lambda_{k}(A)\}\geq c(M_{p}^{*}AM_{p}).
\]
Since $0\leq c(M_{p}^{*}AM_{p})\leq\widetilde{r}(M_{p}^{*}AM_{p})\leq r(M_{p}^{*}AM_{p})\leq\|A\|_{2}$ for any $p\in\mathbb{N}$, immediately, we obtain
\begin{equation*}
r_{k}(A)\leq\inf_{p\in\mathbb{N}}r(M_{p}^{*}AM_{p})\,\,\, \textrm{and}\,\,\,\,\widetilde{r}_{k}(A)\geq\sup_{p\in\mathbb{N}}c(M_{p}^{*}AM_{p}).
\end{equation*}
If $0\notin\Lambda_{k}(A)$, then by Theorem \ref{th2}, $0\notin F(M^{*}_{l}AM_{l})$ for some $l\in\mathbb{N}$, $M_{l}\in\mathcal{X}_{n-k+1}$ and $c(M^{*}_{l}AM_{l})=\widetilde{r}(M^{*}_{l}AM_{l})$. Hence
\[
\widetilde{r}_{k}(A)\geq\sup_{p\in\mathbb{N}}c(M^{*}_{p}AM_{p})\geq\widetilde{r}(M^{*}_{l}AM_{l})\geq\inf_{p\in\mathbb{N}}
\widetilde{r}(M^{*}_{p}AM_{p}).
\]
\end{proof}
The numerical radius function $r(\cdot):\mathcal{M}_{n}\to\mathbb{R}_{+}$ is not a matrix norm, never\-theless, it satisfies the power inequality $r(A^{m})\leq[r(A)]^{m}$, for all po\-sitive integers $m$, which is  utilized  for stability issues of several iterative methods \cite{Ando,H.J.T}. On the other hand, the $k$-rank numerical radius fails to satisfy the power inequality, as the next counterexample reveals.
\begin{example}
Let the matrix $A=\left[\begin{smallmatrix}
  1.8 & 2 & 3 & 4 \\
  0 & 0.8+\mathrm{i} & 0 & \mathrm{i} \\
 -2 & 1 & -1.2 & 1 \\
  0 & 0 & 1 & 0.8 \\
\end{smallmatrix}\right]$. Using Theorems \ref{th1} and \ref{th2}, the set $\Lambda_{2}(A)$ is illustrated in the left part of Figure \ref{fig1}  by the uncovered area inside the figure. Clearly, it is included in the unit circular disc, which indicates that $r_{2}(A)<1$. On the other hand, the set $\Lambda_{2}(A^{2})$, illustrated in the right part of  Figure \ref{fig1} with the same manner, is not bounded by the unit circle and thus $r_{2}(A^{2})>1$. Obviously, $[r_{2}(A)]^{2}<1<r_{2}(A^{2})$.\\

\begin{figure}[here]
\begin{tabular}{cc}
 \includegraphics[width=6cm]{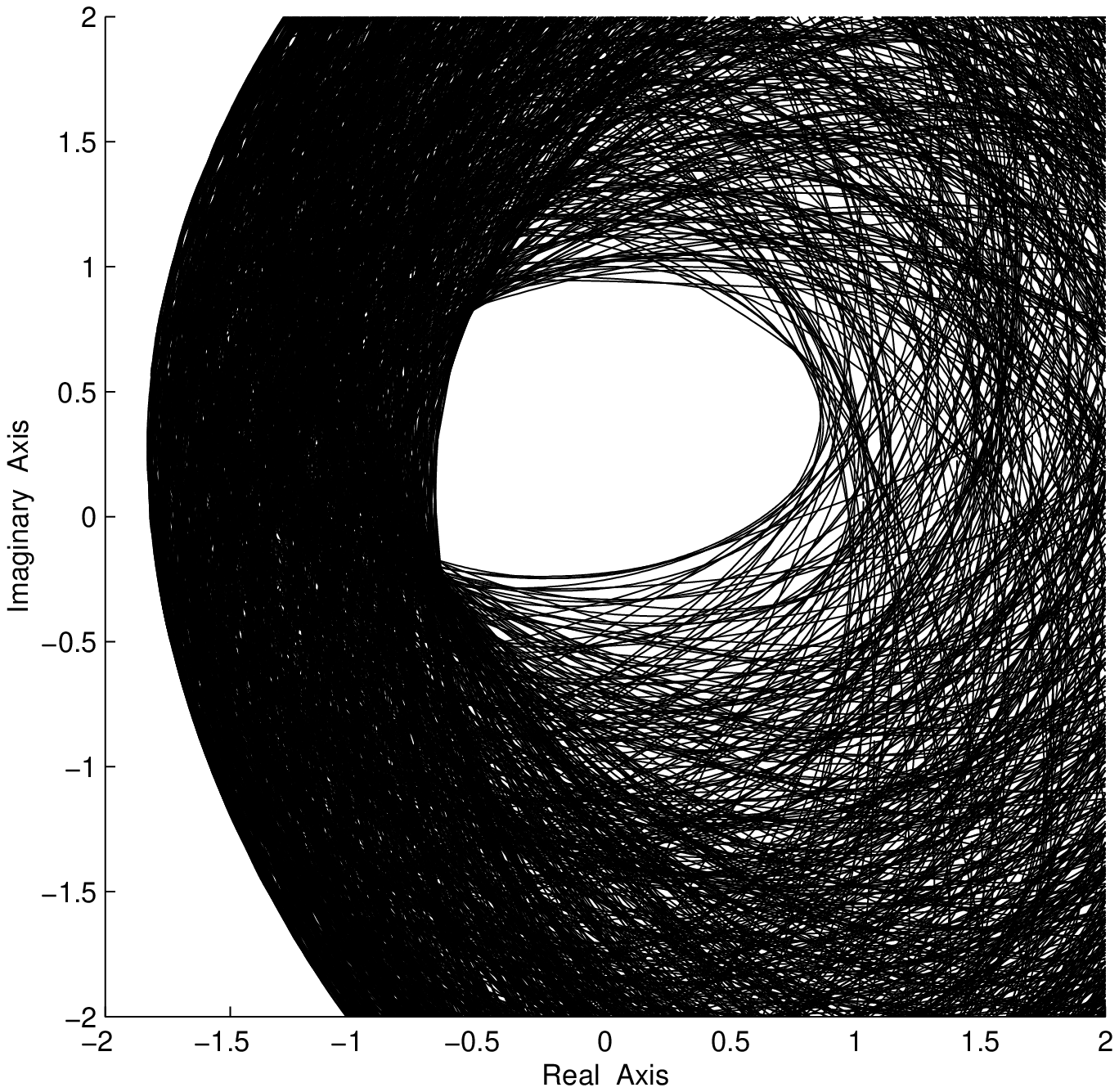} & \includegraphics[width=6cm]{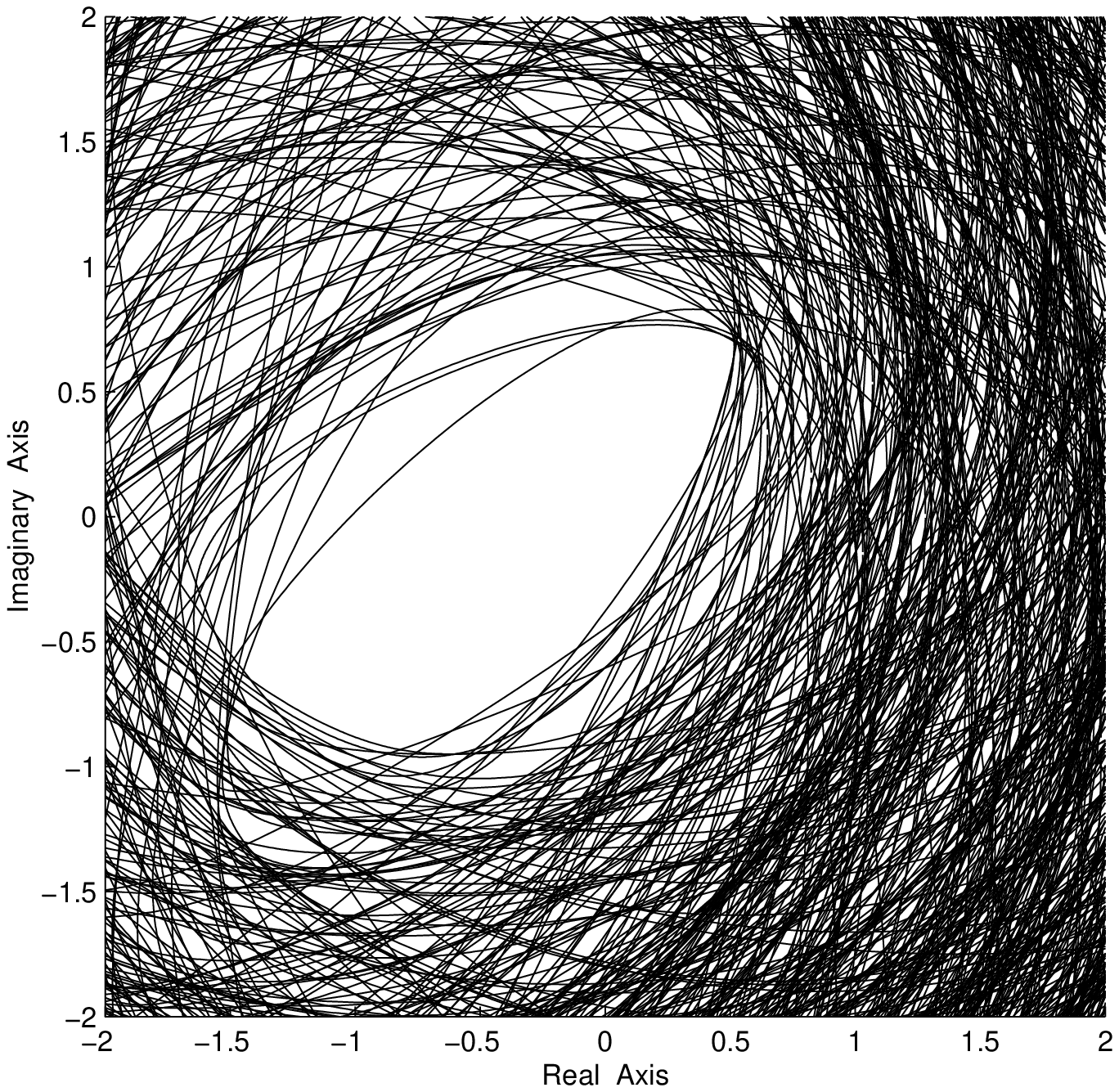} \\
\end{tabular}
\caption{The ``white'' bounded areas inside the figures depict the sets $\Lambda_{2}(A)$ (left) and  $\Lambda_{2}(A^{2})$ (right).}\label{fig1}
\end{figure}
\end{example}
\par
The results developed in this paper draw attention to the rank-$k$ nume\-rical range $\Lambda_{k}(L(\lambda))$ of a matrix polynomial  $L(\lambda)=\sum_{i=0}^{m}A_{i}\lambda^{i}$ $(A_{i}\in\mathcal{M}_{n})$, which has been  extensively studied in \cite{Aretaki,Aret}. It is worth noting that Theorem 2.2 can be also gene\-ralized in the case of $L(\lambda)$, which follows readily from the proof. Hence,  the rank-$k$ numerical radii of $\Lambda_{k}(L(\lambda))$ can be elaborated with the same spirit as here \cite{Adam}.

\appendix
\section{\null}

Following we provide \textit{another construction} of a family of $n\times(n-k+1)$ isometries $\{M_{\nu}: \nu\in\mathbb{N}\}$ presented in Theorem \ref{th2}.
\begin{proof}
By Theorem \ref{th1}, we have
\begin{equation}\label{eq0}
\Lambda_{k}(A)=\bigcap_{M\in\mathcal{X}_{n-k+1}}F(M^{*}AM),
\end{equation}
which is known to be a compact and convex subset  of $\mathbb{C}$. For any $n\times(n-k+1)$ isometry $M_{\nu}$ $(\nu\in\mathbb{N})$, we have $\Lambda_{k}(A)\subseteq F(M_{\nu}^{*}AM_{\nu})$ for all $\nu\in\mathbb{N}$ and thus,
\begin{equation}\label{eq1}
\Lambda_{k}(A)\subseteq\bigcap_{\nu\in\mathbb{N}}F(M_{\nu}^{*}AM_{\nu}).
\end{equation}
In order to prove equality in the relation \eqref{eq1}, we  distinguish two cases for the interior of $\Lambda_{k}(A)$.

Suppose first that $\mathrm{int}\Lambda_{k}(A)\neq\emptyset$. Then by \eqref{eq1}, we obtain
\[
\emptyset\neq \mathrm{int}\Lambda_{k}(A)\subseteq \mathrm{int}\bigcap_{\nu\in\mathbb{N}}F(M_{\nu}^{*}AM_{\nu})
\]
and since $\bigcap_{\nu}F(M_{\nu}^{*}AM_{\nu})$ is convex and closed, we establish
\begin{equation}\label{eq4}
\overline{\mathrm{int}\bigcap_{\nu\in\mathbb{N}}F(M_{\nu}^{*}AM_{\nu})}=\bigcap_{\nu\in\mathbb{N}}F(M_{\nu}^{*}AM_{\nu}),
\end{equation}
where $\overline{\,\,\,\cdot\,\,}$ denotes the closure of a set. Thus, combining the relations \eqref{eq1} and \eqref{eq4}, we have
\begin{equation}\label{eq5}
\Lambda_{k}(A)\subseteq\overline{\mathrm{int}\bigcap_{\nu\in\mathbb{N}}F(M_{\nu}^{*}AM_{\nu})}.
\end{equation}
Further, we claim that $\mathrm{int}\bigcap_{\nu}F(M_{\nu}^{*}AM_{\nu})\subseteq\Lambda_{k}(A)$. Assume on the contrary that $z_{0}\in \mathrm{int}\bigcap_{\nu}F(M_{\nu}^{*}AM_{\nu})$ but $z_{0}\notin\Lambda_{k}(A)$, then there exists an open neighborhood
$\mathcal{B}(z_{0},\varepsilon)$, with $\varepsilon >0$, such that
\begin{equation*}
\mathcal{B}(z_{0},\varepsilon)\subset\bigcap_{\nu\in\mathbb{N}}F(M_{\nu}^{*}AM_{\nu})\,\,\,\textrm{and}\,\,\,
\mathcal{B}(z_{0},\varepsilon)\cap\Lambda_{k}(A)=\emptyset.
\end{equation*}
Then, the set $[\Lambda_{k}(A)]^{c}=\mathbb{C}\setminus\Lambda_{k}(A)$ is separable, as an open  subset of the sepa\-rable space $\mathbb{C}$ and let $\mathcal{Z}$ be a countable dense subset of $[\Lambda_{k}(A)]^{c}$ \cite{top}.
Therefore, there exists a sequence $\{z_{p}:p\in\mathbb{N}\}$ in $\mathcal{Z}$ such that
$\lim_{p\to\infty}z_{p}=z_{0}$ and $z_{p}\in\mathcal{B}(z_{0},\varepsilon)$. Moreover, $z_{p}\in[\Lambda_{k}(A)]^{c}$ and by \eqref{eq0}, it follows that for any $p$ correspond indices $j_{p}\in\mathbb{N}$ such that $z_{p}\notin F(M_{j_{p}}^{*}AM_{j_{p}})$. Thus $z_{p}\notin\bigcap_{p\in\mathbb{N}}F(M_{j_{p}}^{*}AM_{j_{p}})$, which is absurd, since $z_{p}\in\mathcal{B}(z_{0},\varepsilon)\subset\bigcap_{\nu\in\mathbb{N}}F(M_{\nu}^{*}AM_{\nu})$. Hence $z_{0}\in\Lambda_{k}(A)$,
verifying our claim and we obtain
\begin{equation}\label{eq8}
\overline{\mathrm{int}\bigcap_{\nu\in\mathbb{N}}F(M_{\nu}^{*}AM_{\nu})}\subseteq\overline{\Lambda_{k}(A)}=\Lambda_{k}(A).
\end{equation}
By \eqref{eq4}, \eqref{eq5} and \eqref{eq8}, the required equality is asserted.

Consider now that $\Lambda_{k}(A)$ has no interior points, namely, it is a line segment or a singleton.
Then there is a suitable affine subspace $\mathcal{V}$ of $\mathbb{C}$ such that $\Lambda_{k}(A)\subseteq\mathcal{V}$ and with respect to the subspace topology, we have $\mathrm{int}\Lambda_{k}(A)\neq\emptyset$ and $\mathcal{V}\setminus\Lambda_{k}(A)$ be separable.
Following the same arguments as above, let $\widetilde{\mathcal{Z}}$ be a countable dense subset of $\mathcal{V}\setminus\Lambda_{k}(A)$. Hence, there is a sequence $\{\widetilde{z}_{q}: q\in\mathbb{N}\}$ in $\widetilde{\mathcal{Z}}$ converging to $z_{0}$ and  $\widetilde{z}_{q}\in\mathcal{B}(z_{0},\varepsilon)\subset
\bigcap_{\nu\in\mathbb{N}}F(M_{\nu}^{*}AM_{\nu})$. On the other hand, by \eqref{eq0}, we have $\widetilde{z}_{q}\notin \bigcap_{q\in\mathbb{N}}F(M_{i_{q}}^{*}AM_{i_{q}})$ for some indices $i_{q}\in\mathbb{N}$.
Clearly, we are led to a contradiction and we deduce $\bigcap_{\nu\in\mathbb{N}}F(M_{\nu}^{*}AM_{\nu})\subseteq\Lambda_{k}(A)$. Hence, with \eqref{eq1}, we conclude
\[
\Lambda_{k}(A)=\bigcap_{\nu\in\mathbb{N}}F(M_{\nu}^{*}AM_{\nu}).
\]
\end{proof}
{\bf Acknowledgement.} The authors would like to express their thanks to the reviewer for his comment on Theorem \ref{th2}.

\bibliographystyle{amsplain}

\end{document}